\renewcommand{\b}{\bm b}
\newcommand{\margnote}[1]{
\ifthenelse{\boolean{shownotes}}%
{\marginpar{\raggedright\tiny\texttt{#1}}}%
{}%
}
\newcommand{\hole}[1]{
\ifthenelse{\boolean{shownotes}}%
{\begin{center} \fbox{ \rule {.25cm}{0cm}
\rule[-.1cm]{0cm}{.4cm} \parbox{.85\textwidth}{\begin{center}
\texttt{#1}\end{center}} \rule {.25cm}{0cm}}\end{center}}
{}
}
\newtheorem{thm}{Theorem}[section]
\newtheorem{proposition}[thm]{Proposition}
\newtheorem{lem}[thm]{Lemma}
\newtheorem{cor}[thm]{Corollary}
\newtheorem{rem}[thm]{Remark}
\theoremstyle{definition}
\newtheorem{defn}[thm]{Definition}
\newtheorem{definition}[thm]{Definition}
\newtheorem*{mainthm*}{Selection Theorem}
\newtheorem*{mainthmdue*}{Regularity Theorem}
 \newtheorem{remark}[thm]{Remark}
\DeclareMathOperator{\dive}{\mathrm {div}}
\newcommand{\e}{\varepsilon}		       
\newcommand{\R}{\mathbb{R}}
\newcommand{\T}{\mathbb{T}}
\newcommand{\Z}{\mathbb{Z}}
\newcommand{\de}{\mathrm{d}}
\newcommand{\schema}[1]{{\bf \sc #1}}
\numberwithin{equation}{section}
\title[Weak and parabolic solutions of advection-diffusion equations]{Weak and parabolic solutions of advection-diffusion equations with rough velocity field}
\author{Paolo Bonicatto}
\address[P.\ Bonicatto]{Mathematics Institute, University of Warwick,
	Zeeman Building, CV4 7HP Coventry, UK.}
\email{paolo.bonicatto@warwick.ac.uk}
\author{Gennaro Ciampa}
\address[G.\ Ciampa]{Dipartimento di Matematica ``Federigo Enriques", Universit\`a degli Studi di Milano, Via Cesare Saldini 50, 20133 Milano, Italy.}
\email{gennaro.ciampa@unimi.it}
\author{Gianluca Crippa}
\address[G. Crippa]{Departement Mathematik Und Informatik, Universit\"at Basel, Spiegelgasse 1, CH-4051 Basel, Switzerland.}
\email{gianluca.crippa@unibas.ch}
\begin{document}

\maketitle

\begin{abstract}
We study the Cauchy problem for the advection-diffusion equation $\partial_t u + \dive (u\b ) = \Delta u$ associated with a merely integrable divergence-free vector field $\b$ defined on the torus. We discuss existence, regularity and uniqueness results for distributional and parabolic solutions, in different regimes of integrability both for the vector field and for the initial datum. We offer an up-to-date picture of the available results scattered in the literature, and we include some original proofs. We also propose some open problems, motivated by very recent results which show ill-posedness of the equation in certain regimes of integrability via convex integration schemes.
	
			\small{
			\vskip .3truecm
			\noindent Keywords: advection-diffusion equation, transport/continuity equation, distributional and parabolic solutions
			\vskip.1truecm
			\noindent 2020 Mathematics Subject Classification: 35F10, 35K15, 35Q35}

\end{abstract}

\vspace{20pt}


\section{Introduction}

In this survey, we give a systematic overview of some results on the so-called \emph{advection-diffusion equation} 
\begin{equation}\label{eq:ade-intro}\tag{ADE}
	\partial_t u + \dive(u\b) = \Delta u,
\end{equation}
under general, low regularity assumptions on the (divergence-free) vector field $\b$.
This equation is one of the main building blocks in fluid-dynamics models where  $u$ is a passive scalar which is simultaneously advected (by the given velocity field $\b$) and diffused. We also remark that we consider a fixed diffusivity and we are not interested in vanishing viscosity or behaviors for small diffusivity. We refer to \cite{BCC, BN, DEIJ, LBL_CPDE, LiLuo, S21} for some recent results with degenerate viscosity coefficient.

Due to the presence of the Laplacian, \eqref{eq:ade-intro} is a second-order parabolic partial differential equation. If the vector field $\b$ is smooth, classical existence and uniqueness results are available and can be found in standard PDEs textbooks (see e.g. \cite{EV}). The problem \eqref{eq:ade-intro} has been studied also outside the smooth framework in many classical references, see e.g. \cite{EV, LA2} and the more recent \cite{LBL_book}, whose approach is intimately related to a fluid-dynamics context. We propose here a recent account of the state of the art around the well-posedness problem for \eqref{eq:ade-intro}. The main motivation behind this work lies in the several groundbreaking contributions appeared over the last few years, see e.g. \cite{MS3,MS,MS2}, which have shown \emph{ill-posedness} for \eqref{eq:ade-intro} in certain regimes by means of convex integration schemes.


\subsection*{Summary of the results and structure of the paper}

Given a vector field $\b \colon [0,T] \times \T^d \to \R^d$ on the $d$-dimensional torus $\T^d:= \R^d/\Z^d$, we study the initial value problem for the advection-diffusion equation associated with $\b$, i.e. 
\begin{equation}\label{eq:ad-intro}
	\begin{cases} 
		\partial_t u + \dive(u\b) = \Delta u \\
		u |_{t=0} = u_0, 
	\end{cases}
\end{equation}
where $u_0 \colon \T^d \to \R$ is a given initial datum.
Typically, existence results are obtained by a simple approximation argument: under global bounds on the vector field, one easily establishes energy estimates for the solutions of suitable approximate problems. Such estimates allow to apply standard weak compactness results and the linearity of the equation ensures that the weak limit is a solution to \eqref{eq:ad-intro}.
At a closer look, however, an interesting feature of \eqref{eq:ad-intro} arises: it is possible to give several, a priori different, notions of “weak” solutions and this corresponds to the fact that different a priori estimates are available for \eqref{eq:ad-intro}. This opens a wide spectrum of possibilities and taming this complicated scenario, understanding the relationships among different notions of solutions, is one of the aims of the present work.

\subsubsection*{Distributional solutions} We first deal with divergence-free vector fields $\b$, satisfying a general $L^1_t L_x^p$ integrability condition in space-time, for some $1 \le p\le \infty$. Correspondingly, we assume that the initial datum $u_0 \in L^q(\T^d)$, for some $1 \le q\le \infty$, with $\sfrac{1}{p}+\sfrac{1}{q} \le 1$. This allows to introduce distributional solutions to \eqref{eq:ad-intro}, i.e. functions $u \in L^\infty_t L^q_x$ solving the equation in the sense of distributions. Notice that a mild regularity in time of solutions is always granted for evolutionary PDEs, which allows to give a meaning to the initial condition in the Cauchy problem \eqref{eq:ad-intro}. It is then easily seen that distributional solutions always exist; yet, such a notion seems too vague and uniqueness is, in general, false.

\subsubsection*{Parabolic solutions}
The general lack of uniqueness for distributional solutions motivates the introduction of another notion of solution. Hopefully, such alternative notion will share the same existence results as the distributional ones, offering at the same time some uniqueness results. It turns out that such a notion can be used to show well posedness for fields having enough integrability. If this is the case, exploiting the divergence-free constraint one can show the basic available energy estimate for smooth solutions
\begin{equation*}
	\frac{1}{2}\int_{\T^d} |u(t,x)|^2 \,\de x + \int_0^t \int_{\T^d} |\nabla u(\tau,x)|^2 \, \de\tau\, \de x  = \frac{1}{2}\int_{\T^d} |u_0(x)|^2 \,\de x, 
\end{equation*} 
for every $t \in [0,T]$. The energy estimate entices one to look for solutions possessing $L^2$ gradient, i.e. solutions that are $H^1$ in the space variable. We therefore say that a distributional solution $u \in L^\infty_t L^q_x$ to \eqref{eq:ad-intro} is \emph{parabolic} if it holds $u \in L^2_t H^1_x$. 
Crucially, parabolic solutions carry the exact regularity needed to establish their uniqueness (under a suitable integrability assumption of the field $\b$ w.r.t. the time variable as well). This uniqueness result is proven via a well-known technique, i.e.  resorting to commutators' estimates. The $L^2_tH^1_x$ regularity of the solution allows to obtain a better control on the error one commits when considering smooth approximations of the solution. Such error (which is commonly known as \emph{commutator}) always goes to $0$ in the sense of distributions; however, in order to prove uniqueness, a better control is needed. In particular, in \cite{LBL} it is shown that the commutator for parabolic solutions converges strongly to $0$ in $L^1_{t,x}$. This is made possible by the fact that, asymptotically, the commutator is related to the quantity $\b \cdot \nabla u$ and bounds for this product can be established (for parabolic solutions $u \in L_t^2H_x^1$) if $\b \in L^2_t L^2_x$. This approach is somewhat in duality with the DiPerna-Lions' theory for the linear transport equation \cite{DPL}, where the same convergence of the commutator holds provided that $u\in L^\infty_t L^q_x$ and $\nabla \b \in L^1_t L^p_x$ satisfying $1/p+1/q\leq 1$.

\subsubsection*{A regularity result for distributional solutions}
Besides existence and uniqueness results for distributional and for parabolic solutions, a legitimate question concerns the mutual relationship between these two notions; according to our definitions, parabolic solutions cannot always be defined, but if they can, then they are always distributional. The converse implication is, in general, not true: in \cite{MS3} it is shown that there exist infinitely many distributional solutions $u\in L^\infty_t L^2_x$ to \eqref{eq:ad-intro} with a vector field $\b \in L^\infty_t L^2_x$, while the parabolic one is unique. This motivates our search for a condition that guarantees \emph{parabolic regularity} of a distributional solution. We show that, in the regime $\sfrac{1}{p} + \sfrac{1}{q} \le \sfrac 12$ (and under a $L^2$ integrability assumption of $\b$ w.r.t. time), every distributional solution is parabolic (hence, a fortiori, unique). The precise statement is the following:  
\begin{mainthmdue*}
Let $p,q \in [1, \infty)$ such that $\sfrac{1}{p}+\sfrac{1}{q}\leq \sfrac{1}{2}$. If $\b\in L_t^2L_x^p$ is a divergence-free vector field and $u\in L_t^\infty L^q_x$ is a distributional solution to \eqref{eq:ad-intro}, then $u\in L_t^2 H_x^1$.
\end{mainthmdue*} 
The proof we provide is relatively short and hinges upon a refined commutator estimate (see also Remark \ref{rem:figalli} below). We show that, in the current regime, the convergence to zero of the commutators takes place in $L^2_tH^{-1}_x$ and this is enough to obtain our regularity result (see Lemma \ref{lem:conv_comm2} for the precise commutator estimate).
We remark, en passant, that the $L^2$ integrability seems critical in our argument. Recent works have shown that, at lower integrability, a severe phenomenon of non-uniqueness may arise. In particular, using convex integration techniques, in \cite{MS3, MS, MS2} the authors constructed divergence-free vector fields $\b \in C^0_tL^p_x$, with $1 \le p < \gamma(d) < 2$, such that \eqref{eq:ad-intro} admits infinitely many solutions in the class $C^0_t H^1_x$. Here $\gamma(d) = \sfrac{2d}{d+2}$ denotes a dimensional constant, which is indeed strictly smaller than the critical exponent $2$. The situation in the intermediate regime $\gamma(d) \le p < 2$ is still open and it is the object of one question we formulate. See also \cite{BV}, where nonuniqueness of weak solutions (not necessarily in the Leray class) of the Navier-Stokes equations is shown via convex integration techniques exploiting time-intermittency, and \cite{CL21}, in which it is shown that the integrability of weak solutions plays an essential role for weak-strong uniqueness results for the Navier-Stokes equations.

Finally, we observe that also the integrability in time could play a non trivial role (in a similar spirit to e.g. \cite{CL}): it seems conceivable that non-uniqueness of parabolic solutions arises when $\b \in L^2_tL^p_x$ (instead of $\b \in C^0_tL^p_x$) for a larger class of exponents $p$.

We refer the reader to Figure \ref{fig:2d} and Figure \ref{fig:3d} for a visual summary of the results concerning advection-diffusion equations. 

\subsubsection*{A comparison with LeBris-Lions' theory of renormalized solutions}
Yet another approach to \eqref{eq:ad-intro} builds on the notion of \emph{renormalized solution}. In a nutshell, such concept allows one to define the transport term $v\b$ in a completely general framework (i.e. for any choice of exponents $p,q$) and this is achieved by prescribing that the equation in \eqref{eq:ad-intro} holds not for $u$ but for a (non-linear) function of $u$ (together with some additional assumptions on the regularity of $u$). We have opted not to pursue this direction here and we refer the reader to the monograph \cite{LBL_book} where one can find, besides the theory of bounded parabolic solutions, an extensive and comprehensive study of renormalized solutions (see in particular, \cite[Chapter 2, Remark 16]{LBL_book} for an interesting comparison between distributional and renormalized solutions).

\subsubsection*{Non-smooth diffusions} 
We conclude this introduction noticing that one could, in principle, consider also more sophisticated problems of the form \eqref{eq:ad-intro}, replacing the Laplacian in the right-hand side with other (possibly non-smooth) diffusion operators. 
A thorough study of such advection-diffusion equations with non-smooth diffusions can be found for instance in \cite{F08} (see Remark \ref{rem:figalli} below) and \cite{LBL_CPDE, LBL_book}. Some of the techniques we develop here can actually be applied also to a class of non-smooth diffusion operators and this will be the content of our forthcoming work \cite{BCC2}.
	
\begin{figure}
	\centering 
	\begin{tikzpicture}[scale=1.8]
		
		\draw[fill opacity=0.3, fill=black!20!green] (0,0) -- (3,0) -- (3,1.5) -- (0,1.5); 
		
		\draw[pattern=horizontal lines, pattern color=black] (0,0) -- (3,0) -- (0,3); 
		
		\draw[pattern=vertical lines, pattern color=blue!50] (0,0) rectangle (1.5,1.5);  
		\draw[blue!50, thick] (0,0) rectangle (1.5,1.5);

		\draw[<-,black] (.5,2.1) -- (1.4,3) node[anchor=west]{{\footnotesize $\frac{1}{p} + \frac{1}{q} \le 1$: the product $u\b$ is well defined and distributional solutions exist (Prop. \ref{prop:existence_weak_sol})}}; 
		
		\draw[<-,blue] (.75,.75) -- (2.5,2.5) node[anchor=west]{{\footnotesize $\min\{p, q\} \ge 2$: existence of parabolic solution (Prop. \ref{prop:existence_parabolic_class})}};
		
		\draw[<-,black!60!green] (2.5,.75) -- (3,2) node[anchor=west]{{\footnotesize $q \ge 2$: a-priori estimates in $L^2_t H^{1}_x$ (Rmk. \ref{rmk:ultimo})}}; 
		
		\draw[thick,black] (0,3) -- (3,0);

		\draw[thick, ->] (0,0) -- (4,0) node[anchor=north] { $\frac{1}{p}$}; 
		\draw[thick, ->] (0,0) -- (0,4) node[anchor=south ] {$\frac{1}{q}$}; 
		
		\filldraw (3,0) circle (.8pt) node[anchor=north] {\footnotesize  1}; 
		\filldraw (0,3) circle (.8pt) node[anchor=east] {\footnotesize  1};

		\filldraw (1.5,0) circle (.8pt) node[anchor=north] {\footnotesize  $\frac{1}{2}$}; 
		\filldraw (0,1.5) circle (.8pt) node[anchor=east]{\footnotesize  $\frac{1}{2}$}; 
	\end{tikzpicture}
	\caption{Visual depiction of the \emph{existence} results for distributional and parabolic solutions for vector fields $\b \in L^1_t L^p_x$ and initial datum $u_0 \in L^q$.}\label{fig:2d}
\end{figure}
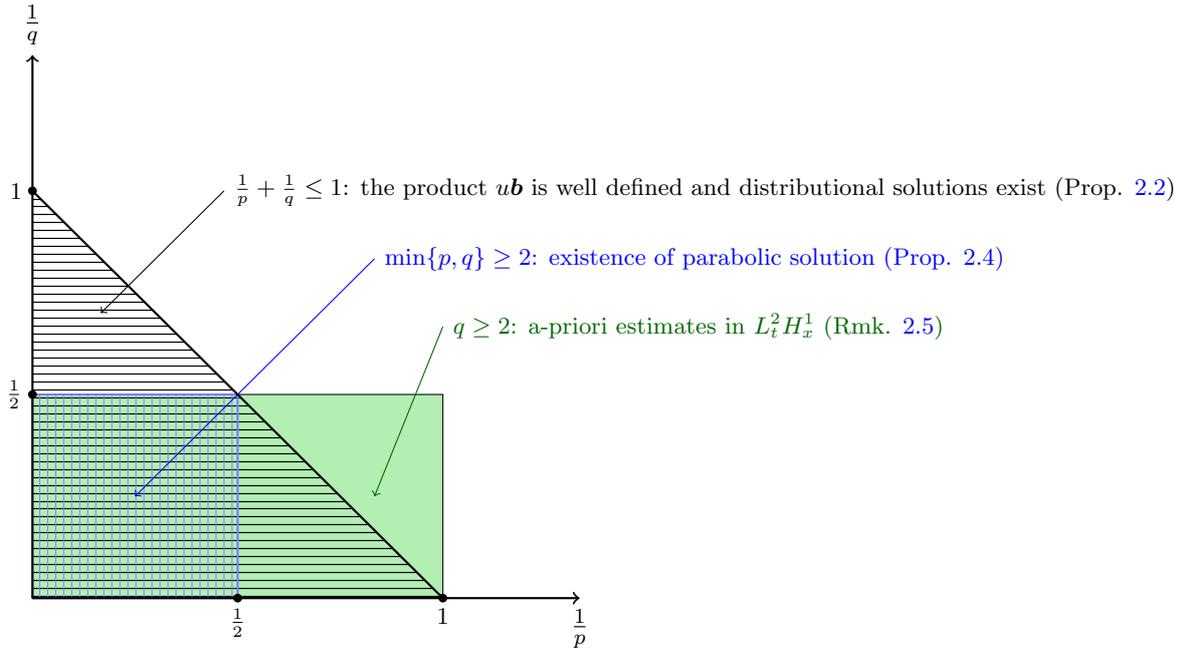 

\begin{figure}
	\centering 
	\tdplotsetmaincoords{70}{90}
	\begin{tikzpicture}[scale=4.5,tdplot_main_coords,>=latex, x={(1,-0.5,0)}]
		%
		

		\fill[fill opacity=0.3, fill=blue!99, loosely dashed]
		(0,0,.5) -- (0,.5,.5)--(.5,.5,.5)--(.5,0,.5);
		\fill[fill opacity=0.3, fill=blue!99]
		(.5,0,0) -- (.5,.5,0)--(.5,.5,.5)--(.5,0,.5);
		\fill[fill opacity=0.3, fill=blue!99]
		(0,.5,0) -- (.5,.5,0)--(.5,.5,.5)--(0,.5,.5);

		
		\fill[fill opacity=0.4, fill=red]
		(0,0,.5) -- (0,.5,0)--(.5,.5,0)--(.5,0,.5);
		\fill[fill opacity=0.4, fill=red]
		(.5,0,0) -- (.5,0,.5)--(.5,.5,0); 
		
		
		\draw[thick, blue] (0,0,.5) -- (0,.5,.5)--(.5,.5,.5)--(.5,0,.5)--cycle;
		\draw[thick, blue] (0,.5,0) -- (.5,.5,0)--(.5,.5,.5)--(0,.5,.5)--cycle;
		\draw[thick, blue] (0,0,.5) -- (0,.5,0); 
		
		\draw[thick, red] (0,0,.5) -- (0,.5,0)--(.5,.5,0)--(.5,0,.5)--cycle;
		\draw[thick, red] (.5,0,0) -- (.5,.5,0)--(.5,0,.5)--cycle;
		
		\draw[thick, black] (0,0,1) -- (0,1,0)--(1,1,0)--(1,0,1)--cycle;
		\draw[thick, black] (1,0,0) -- (1,1,0)--(1,0,1)--cycle;
		
			\draw[thick, blue] (0,0,.5) -- (.5,0,.5) -- (.5,0,0) -- (.5,.5,0) -- (0,.5,0); 
		
		\filldraw[black] (0,0,1) circle (.2pt) node[anchor=west] {\footnotesize  1}; 
		\filldraw[black] (1,0,0) circle (.2pt) node[anchor=north] {\footnotesize  1};
		\filldraw[black] (0,1,0) circle (.2pt) node[anchor=south] {\footnotesize  1};
		
		\filldraw[black] (0,0,.5) circle (.2pt) node[anchor=west] {\footnotesize $\frac{1}{2}$}; 
		\filldraw[black] (.5,0,0) circle (.2pt) node[anchor=north]  {\footnotesize $\frac{1}{2}$}; 
		\filldraw[black] (0,.5,0) circle (.2pt) node[anchor=north]  {\footnotesize $\frac{1}{2}$}; 
		
		
		\draw[<-,black] (0,.2,.7) -- (0,.3,1.1) node[anchor=south]{{\footnotesize $\frac{1}{p} +\frac{1}{q} \le 1$}};
		\draw[<-,red] (0,.15,.1) -- (0,.35,.9) node[anchor=west]{{\footnotesize $\frac{1}{p} +\frac{1}{q} \le \frac{1}{2}, \alpha\geq 2$: every distributional solution is parabolic (Thm. \ref{thm:regolarita})}};
		\draw[<-,blue] (0,.4,.2) -- (0,.5,.7) node[anchor=west]{{\footnotesize $\min\{\alpha,p,q\} \ge 2$: parabolic solutions are unique (Thm. \ref{thm:uniqueness_weak_parabolic})
		}};

		\draw[-,dashed] (0,0,0)--(1,0,0); 
		\draw[-,dashed] (0,0,0)--(0,1,0); 
		\draw[-,dashed] (0,0,0)--(0,0,1); 
		\draw[->] (0,1,0)--(0,1.5,0) node[anchor=north east]{$\frac{1}{p}$};
		\draw[->] (0,0,1)--(0,0,1.5) node[anchor=north east]{$\frac{1}{q}$}; 
		\draw[->] (1,0,0)--(1.5,0,0) node[anchor=north east]{$\frac{1}{\alpha}$}; 	
	\end{tikzpicture}
	\caption{Visual depiction of the \emph{uniqueness} and \emph{regularity} results for distributional and parabolic solutions for fields $\b \in L^\alpha_t L^p_x$ and initial datum $u_0 \in L^q$. Distributional solutions $u\in L^{\infty}_tL^q_x$ are well defined in the black wedge. In the blue cube parabolic solutions are unique and in the red wedge every distributional solution is parabolic.}\label{fig:3d}
\end{figure}
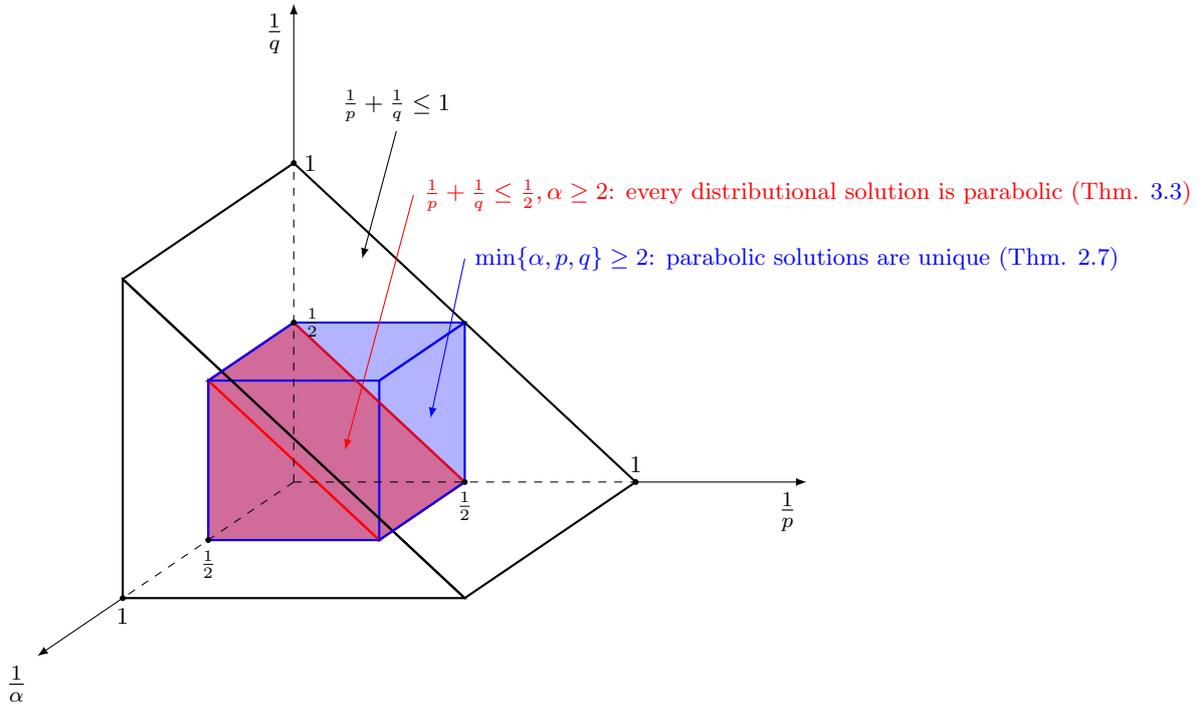

\subsection*{Notations} 
Throughout the paper, $d \ge 1$ is a fixed integer. We denote by $\T^d := \R^d/\Z^d$ the $d$-dimensional flat torus and by $\mathscr L^d$ the Lebesgue measure on it. We identify the $d$-dimensional flat torus with the cube $[0,1)^d$ and we denote with $\mathsf{d}$ the geodesic distance on $\T^d$, which is given by
$\mathsf{d}(x,y):=\min\{|x-y-k|:k\in\Z^d\,\,\mbox{such that }|k|\leq 2\}$. 
We use the letters $p,q$ to denote real numbers in $[1,+\infty]$ and $p'$ is the (H\"older) conjugate to $p$. We adopt the standard notation for Lebesgue spaces $L^p(\T^d)$ and for Sobolev spaces $W^{k,p}(\T^d)$; in particular, $H^k(\T^d) := W^{k,2}(\T^d)$. We denote by $\|\cdot\|_{L^p}$ (respectively $\|\cdot\|_{W^{k,p}}$,$\|\cdot\|_{H^k}$) the norms of the aforementioned functional spaces, omitting the domain dependence when not necessary and every definition can be adapted in a standard way to the case of spaces involving time, like e.g. $L^1([0,T];L^p(\T^d))$. 

\section{Distributional and parabolic solutions}

In this section, we are interested in the following Cauchy problem
\begin{equation}\label{eq:ad}
\begin{cases}
\partial_t u + \dive(\b u) = \Delta u &  \text{ in } (0,T) \times \T^d \\
u\vert_{t=0}=u_0 & \text{ in } \T^d
\end{cases}
\end{equation}
where the data of the problem are $T>0$, the vector field $\b$ and the initial datum $u_0$. We want first to present some different notions of solutions (distributional and parabolic) and then discuss existence, uniqueness and mutual relationship under general integrability assumptions on $\b$ and $u_0$.

\subsection{Distributional solutions}

We start by giving the following definition.
\begin{defn}[Distributional solution]\label{def:weak_sol_ad}
	Let $\b\in L^1([0,T]; L^p(\T^d))$ be a divergence-free vector field and $u_0\in L^q(\T^d)$ for $p,q$ such that $\sfrac{1}{p} +\sfrac{1}{q} \leq 1$. A function $u\in L^{\infty}([0,T];L^q(\T^d))$ is a {\em distributional solution} to \eqref{eq:ad} if for any $\varphi\in C^\infty_c([0,T)\times\T^d)$ the following equality holds: 
	$$
	\int_0^T\int_{\T^d} u(\partial_t\varphi+\b\cdot\nabla\varphi+	\Delta\varphi) \de x \de t+\int_{\T^d} u_0\varphi(0,\cdot) \de x=0.
	$$
\end{defn}
Notice that in the definition of distributional solutions the assumption that $p,q$ satisfy $\sfrac{1}{p} +\sfrac{1}{q} \leq 1$ is the minimum requirement we need in order to have $u \b\in L^1$ so that the definition makes sense. 
The proof of existence of distributional solutions is well-known and immediately follows from a classical a priori estimate.

\begin{proposition}\label{prop:existence_weak_sol}
	Let $\b\in L^1([0,T]; L^p(\T^d))$ be a divergence-free vector field and $u_0\in L^q(\T^d)$ for $p,q$ such that $\sfrac{1}{p} +\sfrac{1}{q} \leq 1$. Then there exists a distributional solution $u\in L^{\infty}([0,T];L^q(\T^d))$ to \eqref{eq:ad}.
\end{proposition}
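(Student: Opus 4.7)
The plan is to obtain $u$ as a weak-$*$ limit of smooth solutions of regularized problems, relying on the divergence-free condition to propagate the $L^q$-norm.

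\textbf{Step 1: Regularization.} First I would mollify in the space variable, setting $\b^\e := \b \ast \rho_\e$ (where $\rho_\e$ is a standard mollifier on $\T^d$) and $u_0^\e := u_0 \ast \rho_\e$. Mollification commutes with the divergence, so $\dive \b^\e = 0$; moreover $\b^\e \in L^1([0,T];C^\infty(\T^d))$, $u_0^\e \in C^\infty(\T^d)$, and the natural bounds $\|\b^\e\|_{L^1_tL^p_x} \le \|\b\|_{L^1_tL^p_x}$, $\|u_0^\e\|_{L^q} \le \|u_0\|_{L^q}$ hold, with $\b^\e \to \b$ strongly in $L^1_tL^p_x$ and $u_0^\e \to u_0$ strongly in $L^q$ (for $p,q<\infty$; the $L^\infty$ case is handled by weak-$*$ convergence, which is all we need below).

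\textbf{Step 2: Smooth solutions and energy estimate.} For each $\e>0$, the Cauchy problem with data $(\b^\e, u_0^\e)$ admits a classical solution $u^\e$ by standard parabolic theory. The key a priori estimate comes from testing the equation against $|u^\e|^{q-2}u^\e$: since $\dive \b^\e = 0$, the drift term contributes $\frac1q\int \b^\e\cdot\nabla(|u^\e|^q)\,\de x = 0$, while the Laplacian contributes $-(q-1)\int |u^\e|^{q-2}|\nabla u^\e|^2\,\de x \le 0$. Hence $t \mapsto \|u^\e(t)\|_{L^q}$ is non-increasing, so
\[
\sup_{t \in [0,T]} \|u^\e(t)\|_{L^q(\T^d)} \le \|u_0\|_{L^q(\T^d)}.
\]
(For $q=\infty$ the same bound follows from the maximum principle.)

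\textbf{Step 3: Compactness.} The uniform bound gives, via Banach--Alaoglu, a subsequence (not relabelled) and $u \in L^\infty([0,T];L^q(\T^d))$ such that $u^\e \weaktos u$ in $L^\infty_tL^q_x$.

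\textbf{Step 4: Passage to the limit.} Each $u^\e$ satisfies, for every $\varphi \in C_c^\infty([0,T)\times\T^d)$,
\[
\int_0^T\!\!\int_{\T^d} u^\e(\partial_t\varphi + \b^\e\cdot\nabla\varphi + \Delta\varphi)\,\de x\,\de t + \int_{\T^d} u_0^\e\,\varphi(0,\cdot)\,\de x = 0.
\]
The terms involving $\partial_t\varphi$ and $\Delta\varphi$ pass to the limit immediately by weak-$*$ convergence (using that $\partial_t\varphi$ and $\Delta\varphi$ lie in $L^1_tL^{q'}_x$), and the initial-datum term passes by strong convergence of $u_0^\e$. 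The main obstacle is the nonlinear term: I would split
\[
\int u^\e \b^\e\cdot\nabla\varphi - \int u\,\b\cdot\nabla\varphi = \int u^\e(\b^\e-\b)\cdot\nabla\varphi + \int(u^\e-u)\,\b\cdot\nabla\varphi.
\]
For the first piece, H\"older with $\sfrac1p+\sfrac1q\le 1$ gives
\[
\left|\int u^\e(\b^\e-\b)\cdot\nabla\varphi\right| \le \|\nabla\varphi\|_\infty \|u^\e\|_{L^\infty_tL^q_x} \|\b^\e-\b\|_{L^1_tL^p_x} \to 0.
\]
For the second piece, the condition $\sfrac1p + \sfrac1q \le 1$ means $p \ge q'$, so $\b\cdot\nabla\varphi \in L^1_tL^p_x \subset L^1_tL^{q'}_x$ on the torus; hence the weak-$*$ convergence $u^\e \weaktos u$ forces this term to vanish as well. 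Combining everything yields the distributional formulation for $u$, completing the proof.
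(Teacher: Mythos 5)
Your strategy coincides with the paper's: mollify the data, solve the regularized problem classically, obtain the $L^q$ decay from the divergence-free structure by testing against a convex function of the solution, and pass to the limit by weak-$*$ compactness and linearity. Your Step 4 is in fact more detailed than the paper's proof, which simply invokes linearity of the equation; the splitting of the drift term and the observation that $\b\cdot\nabla\varphi\in L^1_tL^{q'}_x$ is exactly the right way to make that passage rigorous for $1<q\le\infty$.

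The genuine gap is the endpoint $q=1$ (which the statement allows, forcing $p=\infty$). In Step 3 you invoke Banach--Alaoglu to extract a weak-$*$ limit in $L^\infty([0,T];L^q(\T^d))$ for every $q$, but $L^1(\T^d)$ is not a dual space: a sequence bounded in $L^\infty_tL^1_x$ is weak-$*$ precompact only in the space of measures, and mass can concentrate, so the limit need not belong to $L^\infty_tL^1_x$. (A secondary issue in the same regime: your H\"older estimate for the first piece of the drift term would require $\b^\e\to\b$ in $L^1_tL^\infty_x$, which mollification does not provide.) The paper closes this gap with an equi-integrability argument: by De la Vall\'ee Poussin's theorem there is a superlinear convex $\Psi$ with $\sup_{\delta}\int_{\T^d}\Psi(|u_0^\delta|)\,\de x<\infty$; the same convexity computation as in your Step 2 propagates this to $\sup_{t}\int_{\T^d}\Psi(|u^\delta(t,\cdot)|)\,\de x\le C$ uniformly in $\delta$, and the converse direction of De la Vall\'ee Poussin (equivalently, Dunford--Pettis) then yields weak precompactness in $L^1$. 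You would need to add this step, or restrict your argument to $q>1$, where it is complete.
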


\begin{proof}
	Let $(\rho^\delta)_\delta$ be a standard family of mollifiers and let us define $\b^\delta=\b*\rho^\delta$, $u_0^\delta=u_0*\rho^\delta$. Then, we consider the approximating problem
	\begin{equation}\label{eq:ad_be}
	\begin{cases}
	\partial_t u^\delta+\dive(\b^\delta u^\delta) =\Delta u^\delta\\
	u^\delta(0,\cdot)=u_0^\delta. 
	\end{cases}
	\end{equation}
	Being $\b^\delta$ and $u_0^\delta$ smooth, there exists a unique smooth solution $u^\delta$ to \eqref{eq:ad_be} (see \cite{EV}). It is readily checked that the sequence $u^\delta$ is equi-bounded in $L^\infty([0,T];L^q(\T^d))$. Indeed, we can multiply the equation in \eqref{eq:ad_be} by $\beta'(u^\delta)$, where $\beta \colon \R \to \R$ is a smooth, convex function: by an easy application of the chain rule and integrating in space, we get 
	\begin{equation*}
	\frac{d}{dt} \int_{\T^d} \beta(u^\delta(t, x)) \, \de x = - \int_{\T^d} \beta''(u^\delta(t, x)) |\nabla \beta (u^\delta(t, x))|^2 \, \de x \le 0.
	\end{equation*}
	In particular, fixing $t>0$ and integrating in time on $[0,T]$ we obtain 
	\begin{equation}\label{eq:estimate_beta}
	\int_{\T^d} \beta(u^\delta(t, x)) \, \de x \le \int_{\T^d} \beta(u_0^\delta(x)) \, \de x. 
	\end{equation} 
	By considering a sequence of smooth, convex functions, uniformly convergent to $\beta(s) = |s|^q$, for $1 < q < \infty$, we obtain the following uniform bounds on the $L^q$-norm of the solutions $u^\delta$:
	\begin{equation}\label{eq:diss_norm}
	\|u^\delta(t,\cdot)\|_{L^q(\T^d)}\leq\|u_0^\delta\|_{L^q(\T^d)}\leq\|u_0\|_{L^q(\T^d)}.
	\end{equation}
	For $q>1$ by standard compactness arguments, we can extract a subsequence which converges weakly-star to a function $u\in L^\infty([0,T];L^q(\T^d))$ and it is immediate to deduce that $u$ is a distributional solution of \eqref{eq:ad} because of the linearity of the equation. For $q=\infty$, the estimate \eqref{eq:diss_norm} still holds for every $\delta>0$: we send $q\to \infty$ in \eqref{eq:diss_norm} and then we can conclude as in the previous case. When $q=1$, the estimate is not sufficient to obtain weak compactness in $L^1$, as we need to show the equi-integrability of the family $(v_\e)_{\e>0}$. To do so, we argue in the following way: since $u_0^\delta \to u_0$ strongly in $L^1(\T^d)$, by De la Vall\'ee Poussin's Theorem, there exists a convex, increasing function $\Psi \colon [0,+\infty] \to [0,+\infty]$ such that $\Psi(0)=0$ and 
    \begin{equation}\label{eq:const_def}
    \lim_{s \to \infty} \frac{\Psi(s)}{s} = \infty \qquad \text{and} \qquad \sup_{\delta>0} \int_{\R^d} \Psi(|u_0^\delta(x)|) \, \de x =: C < \infty. 
    \end{equation}
    By an easy approximation argument, we can suppose $\Psi$ to be smooth and we can multiply the equation in \eqref{eq:ade} by $\Psi'(|u^\delta|)$ and we obtain
    \begin{equation*}
    \frac{d}{dt} \int_{\T^d} \Psi(|u^\delta(\tau,x)|) \, \de x  + \int_{\T^d} \Psi^{\prime\prime}(|u^\delta(\tau,x)|) |\nabla(|u^\delta|)|^2 \, \de x = 0.  
    \end{equation*}
    The convexity of $\Psi$ and an integration in time on $(0,t)$ give
    \begin{equation*}
    \int_{\T^d} \Psi(|u^\delta(t,x)|) \, \de x  \le C,
    \end{equation*}
    where $C$ is the same constant as in \eqref{eq:const_def}. Since $t$ is arbitrary, 	\begin{equation}\label{eq:bound_equi_integrability}
    \sup_{t \in (0,T)} \int_{\T^d} \Psi(|u^\delta(t,x)|) \, \de x  \le C.
    \end{equation}
    Since the constant $C$ is independent of $\delta$, we can resort again to De la Vall\'ee Poussin's Theorem and we infer that the family $(u^\delta)_{\delta>0}$ is weakly-precompact in $L^\infty([0,T]; L^1(\T^d))$, and therefore it admits a limit (up to subsequences).
\end{proof}

\subsection{Parabolic solutions}
A special sub-class of distributional solutions is given by the so-called \emph{parabolic solutions}, whose peculiar property is the Sobolev regularity in the space variable. 
As we are going to see, this notion of solution is natural for vector fields possessing enough integrability in the space variable. 

\begin{definition}
	Let $\b\in L^1([0,T]; L^2(\T^d))$ a divergence-free vector field and $u_0\in L^2(\T^d)$. A function $u\in L^{\infty}([0,T];L^2(\T^d))$ is a {\em parabolic solution} to \eqref{eq:ad} if it is a distributional solution to \eqref{eq:ad} and furthermore $u\in L^2([0,T];H^1(\T^d))$.
\end{definition}
We will sometimes refer to the space $L^2([0,T];H^1(\T^d))$ as the \emph{parabolic class}. 

\subsubsection{Existence} We now prove that, under the assumptions above, there exists at least one solution in the parabolic class:

\begin{proposition}\label{prop:existence_parabolic_class} 
	Let $\b\in L^1([0,T]; L^2(\T^d))$ be a divergence-free vector field and $u_0\in L^2(\T^d)$. Then there exists at least one parabolic solution.
\end{proposition}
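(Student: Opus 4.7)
The plan is to revisit the approximation scheme used in Proposition \ref{prop:existence_weak_sol} and upgrade the a priori estimate so that the sequence of approximants is bounded in the parabolic class $L^2_tH^1_x$, and not merely in $L^\infty_t L^2_x$. Concretely, with $\b^\delta = \b * \rho^\delta$ and $u_0^\delta = u_0 * \rho^\delta$, the smooth problem \eqref{eq:ad_be} admits a unique smooth solution $u^\delta$. The key step is to test the equation against $u^\delta$ itself (which now sits at the borderline integrability of the previous proof): using $\dive \b^\delta = 0$ to kill the advective contribution $\int_{\T^d} u^\delta \b^\delta \cdot \nabla u^\delta\,\de x = \tfrac{1}{2}\int_{\T^d} \b^\delta \cdot \nabla (u^\delta)^2\,\de x = 0$, and integrating the dissipative term by parts, one obtains the classical parabolic energy identity
\begin{equation*}
\frac{1}{2}\int_{\T^d} |u^\delta(t,x)|^2\,\de x + \int_0^t\int_{\T^d} |\nabla u^\delta(\tau,x)|^2\,\de x\,\de\tau = \frac{1}{2}\int_{\T^d} |u_0^\delta(x)|^2\,\de x \le \frac{1}{2}\|u_0\|_{L^2}^2.
\end{equation*}
This yields uniform bounds for $(u^\delta)_\delta$ both in $L^\infty([0,T];L^2(\T^d))$ and in $L^2([0,T]; H^1(\T^d))$.

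From here I would extract a subsequence such that $u^\delta \weaktos u$ in $L^\infty_t L^2_x$ and, simultaneously, $\nabla u^\delta \weakto g$ in $L^2_tL^2_x$. Since partial derivatives are continuous in the sense of distributions, the two limits are compatible, i.e.\ $g = \nabla u$, so $u \in L^2_t H^1_x$ with the expected quantitative bound inherited from the energy identity via weak lower semicontinuity of the norms. It remains to check that $u$ is a distributional solution to \eqref{eq:ad}. The linear terms $\partial_t\varphi$ and $\Delta\varphi$ as well as the initial datum are standard, so the only point that requires care is the advection term.

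The main obstacle is therefore the passage to the limit in the product $\int_0^T\int_{\T^d} u^\delta\,\b^\delta\cdot\nabla\varphi\,\de x\,\de t$. I would handle it by a mild upgrade of the previous argument: $\b^\delta \to \b$ strongly in $L^1_t L^2_x$, hence $\b^\delta\cdot\nabla\varphi \to \b\cdot\nabla\varphi$ strongly in $L^1_t L^2_x$ (for any fixed $\varphi \in C^\infty_c([0,T)\times \T^d)$), while $u^\delta \weaktos u$ in $L^\infty_t L^2_x$; the standard ``weak-times-strong'' lemma then gives convergence of the pairing to $\int_0^T\int_{\T^d} u\,\b\cdot\nabla\varphi\,\de x\,\de t$. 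Passing to the limit in the remaining terms is routine, yielding that $u$ solves \eqref{eq:ad} in the sense of Definition \ref{def:weak_sol_ad}, and together with the regularity $u\in L^2_tH^1_x$ already obtained, this concludes the proof that $u$ is a parabolic solution.
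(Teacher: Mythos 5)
Your proposal is correct and follows essentially the same route as the paper: mollify $\b$ and $u_0$, test the smooth approximate problem with $u^\delta$ (i.e.\ take $\beta(s)=s^2/2$) to get the parabolic energy identity and uniform $L^\infty_tL^2_x$ and $L^2_tH^1_x$ bounds, then conclude by weak compactness. The only difference is that you spell out the weak-times-strong argument for the advective term, which the paper subsumes under ``standard weak compactness arguments.''
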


\begin{proof}
	The proof follows the same idea of the one of Proposition \ref{prop:existence_weak_sol}. We consider the approximating problems \eqref{eq:ad_be} and their unique smooth solutions $u^\delta$. Choosing $\beta(s)=s^2/2$ and integrating in time on $[0,T]$, we get the following energy balance
	\begin{equation}\label{eq:energy_estimate}
	\frac{1}{2}\int_{\T^d}|u^\delta(t,x)|^2\de x+\int_0^t\int_{\T^d}|\nabla u^\delta(s,x)|^2\de x \de s=\frac{1}{2}\int_{\T^d}|u_0^\delta(x)|^2\de x. 
	\end{equation}
    The assumption $u_0\in L^2(\T^d)$ allows us to obtain a uniform estimate on the $L^2_tL^2_x$-norm of $\nabla u^\delta$: by retaining the gradient term in \eqref{eq:energy_estimate} and we use standard estimates on the convolution we obtain that
    \begin{equation}
        \|u^\delta\|_{L^\infty L^2}^2+2\|\nabla u^\delta\|_{L^2L^2}^2\leq  \|u_0\|_{L^2}^2.
    \end{equation}
	Thus, standard weak compactness arguments yield the conclusion.
\end{proof}

\begin{rem}\label{rmk:ultimo}
In the proofs of Proposition \ref{prop:existence_weak_sol} and Proposition \ref{prop:existence_parabolic_class}, we have constructed solutions as limit of solutions $(u^\delta)_{\delta}$ associated with a regularization $(\b^\delta)_{\delta}$ of the vector field and $(u_0^\delta)_{\delta}$ of the initial datum. This strategy will be used once more later in the paper and we explicitly remark here that the family $(u^\delta)_{\delta}$ satisfies two a-priori estimates:
\begin{itemize}
	\item[(E1)] $\sup_{t \in [0,T]} \| u^\delta \|_{L^q} \le  C\| u_0 \|_{L^q}$ if $u_0 \in L^q(\T^d)$; 
	\item[(E2)] $\int_0^T \| \nabla u^\delta(t,\cdot) \|_{L^2} \, \de t  \le C\| u_0 \|_{L^2}$ if $u_0 \in L^2(\T^d)$.  
\end{itemize}
These bounds follow integrating by parts and exploiting the divergence-free assumption on the vector field, in particular they are independent of the integrability of $\b$. However, we need the assumption $\b\in L^1([0,T];L^2(\T^d))$ in Proposition \ref{prop:existence_parabolic_class} to give a distributional meaning to the product $u\b$.
\end{rem}

\subsubsection{Uniqueness of solutions in the parabolic class}

The uniqueness of solutions in the parabolic class is a consequence of the following lemma, which is a straightforward modification of \cite[Lemma 5.1]{LBL}. Notice that we do not need to assume $\dive \b=0$.

\begin{lem}[Commutator estimates I]\label{lem:conv_comm}
	Consider a vector field $\b \in L^2([0,T]; L^p(\T^d))$ and a function $w \in L^\infty([0,T]; L^q(\T^d))$, where $p,q$ are positive real numbers with $\sfrac{1}{p}+\sfrac{1}{q}\leq 1$. Let $(\rho^\delta)_\delta$ be a family of smooth convolutions kernels. Define the \emph{commutator of $w$ and $\b$} as follows:
	\begin{equation}\label{eq:comm_def}
	r^\delta:=\b\cdot\nabla (w*\rho^\delta)-\left(\b\cdot\nabla w\right)*\rho^\delta.
	\end{equation}
	If $\nabla w\in L^2([0,T];L^q(\T^d))$, then $r^\delta$ converges to $0$ in $L^1([0,T] \times \T^d)$. 
\end{lem}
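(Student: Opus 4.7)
The plan is to adapt the classical DiPerna--Lions commutator argument, this time exploiting the Sobolev regularity of $w$ (rather than of the vector field, as in the classical setting).

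First, I would derive the explicit pointwise representation
\begin{equation*}
r^\delta(t,x) = \int_{\T^d} \bigl(\b(t,x) - \b(t,y)\bigr) \cdot \nabla w(t,y)\, \rho^\delta(x-y)\, \de y.
\end{equation*}
This follows by writing both terms of \eqref{eq:comm_def} in integral form and using that, since $w$ has weak gradient in $L^1_{\mathrm{loc}}$, mollification commutes with differentiation: $\nabla(w*\rho^\delta) = (\nabla w)*\rho^\delta$. The appearance of the difference $\b(t,x) - \b(t,y)$ is the whole point of the representation, since it gives us access to continuity of translations of $\b$ in $L^p$.

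Next I would estimate the $L^1$ norm. After a change of variables $z = x-y$, I would apply H\"older in the space variable (the hypothesis $\sfrac{1}{p}+\sfrac{1}{q}\le 1$ together with $|\T^d|=1$ gives $\|fg\|_{L^1}\le\|f\|_{L^p}\|g\|_{L^q}$ via an intermediate $L^r$-step with $\sfrac{1}{r} = \sfrac{1}{p}+\sfrac{1}{q}$), followed by Cauchy--Schwarz in time, obtaining
\begin{equation*}
\|r^\delta\|_{L^1([0,T]\times\T^d)} \le \|\nabla w\|_{L^2_tL^q_x}\int_{\T^d}\rho^\delta(z)\,\|\b(\cdot,\cdot) - \b(\cdot,\cdot-z)\|_{L^2_tL^p_x}\,\de z.
\end{equation*}
Since $p<\infty$, the translations are continuous in $L^p(\T^d)$; combined with dominated convergence in the time variable (using the majorant $2\|\b(t,\cdot)\|_{L^p}$, which is square integrable in $t$), this yields $\|\b - \b(\cdot,\cdot-z)\|_{L^2_tL^p_x}\to 0$ as $z\to 0$. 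Because $\rho^\delta$ is supported in a ball of radius $\delta$, the integral on the right-hand side tends to $0$ as $\delta\to 0$, which is the claim.

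The argument is almost entirely mechanical: the only mildly delicate point I anticipate is arranging H\"older and Cauchy--Schwarz in the correct order so that the final bound factors out $\|\nabla w\|_{L^2_tL^q_x}$ and isolates the translation increment of $\b$ in $L^2_tL^p_x$, which is precisely the quantity that vanishes via continuity of translations. The limiting endpoint $p=\infty$ would require a separate approximation argument but lies outside the present hypotheses.
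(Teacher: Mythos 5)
Your proposal is correct and follows essentially the same route as the paper: the identical integral representation of $r^\delta$ in terms of the increment $\b(t,x)-\b(t,x-y)$ paired with $\nabla w$, followed by H\"older in space and Cauchy--Schwarz in time. The only (cosmetic) difference is the final limiting step, where you invoke continuity of translations in $L^p_x$ plus dominated convergence in $t$, while the paper argues via convergence in measure of the increment and dominated convergence directly on the triple integral; both are valid and yield the same conclusion.
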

\begin{proof}
Observe that, for a.e. $t \in [0,T]$ and a.e. $x \in \T^d$, we can explicitly write the commutator in the following form:
	\begin{align*}
	r^\delta(t,x)& =[\b\cdot\nabla (w*\rho^\delta)](t,x)-[(\b\cdot\nabla w)*\rho^\delta](t,x)\\
	&=\b(t,x)\cdot\nabla \int_{\T^d}w(t,x-y)\rho^\delta(y)\de y-\int_{\T^d} \b(t,x-y)\cdot \nabla w(t,x-y) \rho^\delta(y)\de y \\
	&=\int_{\T^d}\rho^\delta(y) \left(\b(t,x)-\b(t,x-y)\right)\cdot\nabla w(t,x-y) \de y\\
	&=\int_{\T^d}\rho(z)\left(\b(t,x)-\b(t,x-\delta z)\right)\cdot\nabla w(t,x-\delta z) \de z.
	\end{align*}
	We thus have that
	\begin{equation*}
	\begin{split}
	\iint_{[0,T] \times \T^d}|r^\delta(t,x)|\de t \de x & =\iint_{[0,T] \times \T^d}\left| \int_{\T^d}\rho(z)\left(\b(t,x)-\b(t,x-\delta z)\right)\cdot\nabla w(t,x-\delta z) \de z\right|\de t \de x  \\
	&\leq \int_{\T^d}  \rho(z)\int_0^T \int_{\T^d}|\b(t,x)-\b(t,x-\delta z)||\nabla w(t,x-\delta z)|\de x \de t \de z.
	\end{split}
	\end{equation*}
Since $(t,x) \mapsto \b(t,x)-\b(t,x-\delta z)$ converges to $0$ in measure (for every fixed $z$), the conclusion follows by the Dominated Convergence Theorem. 
\end{proof}

Having at our disposal the previous lemma, we can now show the uniqueness of solutions in the parabolic class, arguing as in \cite{LBL}. 

\begin{thm}[Uniqueness of parabolic solutions]\label{thm:uniqueness_weak_parabolic} 
	Consider a divergence-free vector field $\b\in L^2([0,T]; L^2(\T^d))$. Then there exists at most one parabolic solution to \eqref{eq:ad}.
\end{thm}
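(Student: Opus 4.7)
Let $u = u_1 - u_2$ be the difference of two parabolic solutions. By linearity, $u \in L^\infty_t L^2_x \cap L^2_t H^1_x$ is a distributional solution of \eqref{eq:ad} with zero initial datum, and the task is to show $u \equiv 0$. My plan is to derive an energy-type inequality of the form $\int_{\T^d} \beta(u(t,x))\,dx \le 0$ for some $\beta \ge 0$ vanishing only at the origin, which forces $u(t,\cdot) \equiv 0$. To make the chain rule rigorous on a merely $H^1_x$ function I will mollify in space and keep track of the mollification error via the commutator estimate of Lemma \ref{lem:conv_comm}; to render that error truly negligible in the limit, I renormalize with a $\beta$ whose derivative is bounded rather than testing with $u$ directly.

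Concretely, fix a smooth, even, convex $\beta \colon \R \to [0,\infty)$ with $\beta(0)=0$, $\beta(s)>0$ for $s\ne 0$, and $\|\beta'\|_\infty < \infty$ (for instance $\beta(s) = \sqrt{1+s^2}-1$). Let $(\rho^\delta)_\delta$ be a family of spatial convolution kernels and set $u^\delta := u*\rho^\delta$. Since $\dive \b = 0$ gives $\dive(\b u) = \b \cdot \nabla u$, convolving the equation for $u$ with $\rho^\delta$ yields
\[
\partial_t u^\delta + \b \cdot \nabla u^\delta = \Delta u^\delta + r^\delta,
\]
where $r^\delta$ is precisely the commutator in Lemma \ref{lem:conv_comm} applied with $w=u$ and $(p,q)=(2,2)$. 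As $u^\delta$ is smooth in the spatial variable, multiplying by $\beta'(u^\delta)$ and integrating on $\T^d$ (using $\dive \b = 0$ to discard the convective term), then integrating in time on $[0,t]$, gives
\[
\int_{\T^d} \beta(u^\delta(t))\,dx + \int_0^t \!\!\int_{\T^d} \beta''(u^\delta)|\nabla u^\delta|^2\,dx\,ds = \int_{\T^d} \beta(u^\delta(0))\,dx + \int_0^t \!\!\int_{\T^d} \beta'(u^\delta)\,r^\delta\,dx\,ds.
\]

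The passage to the limit $\delta \to 0$ is then straightforward: the initial term vanishes since $u(0)=0$; the commutator term vanishes because $\|\beta'(u^\delta)\|_\infty \le \|\beta'\|_\infty$ uniformly while $r^\delta \to 0$ in $L^1_{t,x}$ by Lemma \ref{lem:conv_comm}; the dissipation term is non-negative and Fatou's lemma provides a non-negative lower bound in the limit; finally, the leading term converges thanks to $u^\delta(t,\cdot) \to u(t,\cdot)$ in $L^2_x$ (for a.e.\ $t$) combined with the linear growth of $\beta$. The resulting inequality $\int_{\T^d} \beta(u(t,x))\,dx \le 0$, together with $\beta \ge 0$ and $\beta^{-1}(\{0\})=\{0\}$, forces $u(t,\cdot) \equiv 0$ for a.e. $t \in [0,T]$.

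The delicate point of the argument is the commutator step: testing with $u^\delta$ itself (i.e.\ picking $\beta(s)=s^2/2$) would pair $r^\delta \to 0$ in $L^1_{t,x}$ against $u^\delta$ bounded only in $L^\infty_t L^2_x$, which is not enough to produce an $o(1)$ quantity. Renormalizing with a $\beta$ of bounded derivative bypasses this difficulty entirely and is the key device that makes the $L^1_{t,x}$-convergence of the commutators provided by Lemma \ref{lem:conv_comm} sufficient to close the uniqueness argument.
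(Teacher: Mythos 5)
Your argument is correct and coincides with the paper's own proof: reduce to zero initial datum by linearity, mollify in space, renormalize with a bounded-derivative convex $\beta$ (the paper uses $\beta'(s)=\arctan(s)$ where you use $\beta(s)=\sqrt{1+s^2}-1$), and kill the commutator term by pairing the $L^1_{t,x}$ convergence from Lemma \ref{lem:conv_comm} against the uniform bound on $\beta'$. Your closing observation about why testing with $u^\delta$ itself would fail is exactly the point the paper makes in the remark following the theorem.
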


\begin{proof} 
The uniqueness is a rather straightforward consequence of the strong convergence of commutators established in Lemma \ref{lem:conv_comm}. More precisely, since the problem is linear, it suffices to show that, if $u$ is a parabolic solution to \eqref{eq:ad} with $u_0 = 0$, then $u = 0$. Consider again a standard family of mollifiers $(\rho^\delta)_\delta$ and set $u^\delta:=u*\rho^\delta$. Then, a direct computation shows that $u^\delta$ solves the following equation
	\begin{equation}\label{eq:ade}
	\begin{cases}
	\partial_t u^\delta+\dive(\b u^\delta)=\Delta u^\delta+r^\delta\\
	u^\delta(0,\cdot)=0
	\end{cases}
	\end{equation}
where $r^\delta$ is the commutator between $u$ and $\b$, defined as in \eqref{eq:comm_def}. 
Consider now a smooth function $\beta\in C^2(\R)$, with the following properties: $\beta(s) \ge 0$, $|\beta'(s)|\le C$ for some $C>0$ and $\beta''(s) \ge 0$ for any $s\in \R$ with $\beta(s) = 0$ if, and only if, $s = 0$ (e.g. one could easily verify that the function which satisfies $\beta'(s) = \arctan(s)$ with $\beta(0)=0$ is an admissible choice).
Multiplying the equation by $\beta'(u^\delta)$ and integrating on $[0,t]\times\T^d$ we obtain
\begin{align*}
& \int_{\T^d}\beta(u^\delta) \de x+\int_0^t\int_{\T^d}\beta''(u^\delta)|\nabla u^\delta|^2\de x \de s 
= \int_0^t\int_{\T^d}\beta'(u^\delta)r^\delta\de x\de s.
\end{align*}
We now let $\delta \to 0$: using the uniform bound on $\beta'$ and Lemma \ref{lem:conv_comm} we deduce that the right-hand side converges to $0$ and thus 
\begin{equation*}
\int_{\R^d}\beta(u(t,x))\de x = - \int_0^t\int_{\T^d}\beta''(u) |\nabla u|^2\de x \de s \le 0. 
\end{equation*}
Since $t \in [0,T]$ is arbitrary, the conclusion $u\equiv 0$ easily follows.
\end{proof}

\begin{remark}Notice that in the proof above we do not really need $r^\delta \to 0$ strongly in $L^1$ as $\delta \to 0$. A weaker convergence is enough to guarantee that the integral of the product $\beta'(u^\delta)r^\delta$ converges to $0$ as $\delta \to 0$: this is precisely the idea that will be used in the next section to prove the Regularity Theorem, see in particular Lemma \ref{lem:conv_comm2}.
\end{remark}
If the vector field is less integrable than $L^2(\T^d)$, then a severe phenomenon of non-uniqueness may arise. In particular, in \cite{MS3} counterexamples are constructed via convex integration techniques: it is shown that there exist infinitely many solutions to \eqref{eq:ad} in the class $C([0,T];H^1(\T^d))$ with a divergence-free vector field $\b\in C([0,T];L^p(\T^d))$ with $1\leq p<\sfrac{2d}{d+2}$, for which it additionally holds $u\b\in L^1([0,T]\times\T^d)$. This, however, leaves open the following questions.
\begin{itemize}
\item[(Q1)] \emph{What happens in the case $\sfrac{2d}{d+2}\leq p<2$?}
\item[(Q2)] \emph{If uniqueness holds for $p$ as in (Q1), is it possible to show non-uniqueness of solutions in the larger class $L^2([0,T];H^1(\T^d))$ for vector fields which are merely $L^2$ in time (instead than continuous)?}
\item[(Q3)] \emph{For a vector field $\b\in L^r([0,T];L^2(\T^d))$ with $1\leq r<2$, are parabolic solutions unique?}
\end{itemize}
It is reasonable to imagine that a possible strategy to tackle (Q2) could be to exploit ``time-intermettency'' as in \cite{BV, CL}, which allows to increase the space integrability at the expense of the time integrability. 

\section{A regularity result} A natural question is under which conditions a distributional solution is a parabolic solution. In order to address this problem, we will need the following version of the commutator lemma which, to the best of our knowledge, is not present in the literature.

\begin{lem}[Commutator estimates II]\label{lem:conv_comm2}
	Consider a divergence-free vector field $\b \in L^2([0,T]; L^p(\T^d))$ and a function $w \in L^\infty([0,T]; L^q(\T^d))$, where $p,q$ are positive real numbers with $\sfrac{1}{p}+\sfrac{1}{q}\leq \sfrac{1}{2}$. Let $(\rho^\delta)_\delta$ be a family of smooth convoutions kernels and define $r^\delta$ as in \eqref{eq:comm_def}. Then $r^\delta$ converges to $0$ in $L^2([0,T]; H^{-1}(\T^d))$.
\end{lem}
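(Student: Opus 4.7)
The plan is to exploit the divergence-free condition on $\b$ to rewrite $r^\delta$ as a spatial divergence, and then use the elementary bound $\|\dive(h)\|_{H^{-1}}\leq\|h\|_{L^2}$ to reduce matters to an $L^2_tL^2_x$ estimate on an auxiliary vector field.

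First, since $\dive\b=0$, one has, as distributional identities, $\b\cdot\nabla w=\dive(\b w)$ and $\b\cdot\nabla(w*\rho^\delta)=\dive(\b(w*\rho^\delta))$. Plugging these into \eqref{eq:comm_def} yields
$$
r^\delta \;=\; \dive(h^\delta), \qquad h^\delta \;:=\; \b\,(w * \rho^\delta) \;-\; (\b w) * \rho^\delta.
$$
It therefore suffices to prove that $h^\delta\to 0$ in $L^2([0,T];L^2(\T^d))$ as $\delta\to 0$, which in turn controls $r^\delta$ in $L^2_tH^{-1}_x$.

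Second, expanding the convolutions gives the explicit representation
$$
h^\delta(t,x) \;=\; \int_{\T^d} \rho^\delta(y)\,[\b(t,x) - \b(t,x-y)]\,w(t,x-y) \,\de y.
$$
The plan is to apply Minkowski's integral inequality in $x$ to pull the $L^2_x$ norm inside the integral in $y$, and then to estimate the pointwise $L^2_x$ norm of the integrand via H\"older's inequality with exponents $p$ and $q$. The hypothesis $\sfrac{1}{p}+\sfrac{1}{q}\leq\sfrac{1}{2}$ means that the product lies in $L^s_x$ with $s\geq 2$, and since $\T^d$ has unit measure one has $L^s\hookrightarrow L^2$ for free. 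This yields
$$
\|h^\delta(t,\cdot)\|_{L^2_x} \;\leq\; \|w(t,\cdot)\|_{L^q_x}\int_{\T^d}\rho^\delta(y)\,\|\b(t,\cdot)-\b(t,\cdot-y)\|_{L^p_x}\,\de y.
$$

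Third, squaring and applying the Cauchy--Schwarz inequality with respect to the probability measure $\rho^\delta(y)\,\de y$ (so that the $\int\rho^\delta=1$ factor disappears), integrating in time and swapping the order of integration by Fubini, one arrives at
$$
\|h^\delta\|_{L^2_tL^2_x}^2 \;\leq\; \|w\|_{L^\infty_tL^q_x}^2 \int_{\T^d} \rho^\delta(y)\left(\int_0^T \|\b(t,\cdot)-\b(t,\cdot-y)\|_{L^p_x}^2\,\de t\right)\de y.
$$
Continuity of translations in the Banach space $L^2([0,T];L^p(\T^d))$ makes the inner integral go to $0$ as $y\to 0$, while the trivial bound $4\|\b\|_{L^2_tL^p_x}^2$ provides a uniform dominating constant. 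Since $\rho^\delta$ concentrates at the origin, dominated convergence closes the argument.

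The conceptual heart of the proof — and the only step requiring genuine insight beyond routine manipulation — is the first one: recognizing that, thanks to $\dive\b=0$, the commutator $r^\delta$ is itself a pure spatial divergence of a tensor one can estimate directly. This is precisely what buys one derivative with respect to Lemma \ref{lem:conv_comm} and forces the natural target space to be $L^2_tH^{-1}_x$ rather than $L^1_{t,x}$, thereby relaxing the integrability requirement from $\sfrac{1}{p}+\sfrac{1}{q}\leq 1$ to $\sfrac{1}{p}+\sfrac{1}{q}\leq\sfrac{1}{2}$ at the price of losing a derivative on the test functions.
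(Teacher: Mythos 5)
Your proof is correct and follows essentially the same route as the paper's: both exploit $\dive\b=0$ to write $r^\delta=\dive\bigl[\b\,(w*\rho^\delta)-(\b w)*\rho^\delta\bigr]$ and then reduce the $L^2_tH^{-1}_x$ bound to an $L^2_tL^2_x$ estimate on that vector field (the paper does this by testing against $\varphi$ with $\|\varphi\|_{L^2H^1}\le 1$, which is the same as your $\|\dive h\|_{H^{-1}}\le\|h\|_{L^2}$). Your concluding step via Minkowski, H\"older, Jensen with respect to $\rho^\delta(y)\,\de y$ and continuity of translations in $L^2_tL^p_x$ is a slightly more detailed rendering of the paper's convergence-in-measure plus dominated-convergence argument, but it is not a genuinely different proof.
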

\begin{proof}
	We write the commutator as 
	\begin{equation*}
		\begin{split}
			r^\delta & =[\b\cdot\nabla (w*\rho^\delta)]-[(\b\cdot\nabla w)*\rho^\delta] =\dive [\b (w*\rho^\delta)]-[\dive (\b w)*\rho^\delta] = \dive [\b (w*\rho^\delta) - (\b w)*\rho^\delta], 
		\end{split}
	\end{equation*}
	in the sense of distributions on $[0,T] \times \T^d$. We can thus write 
	\begin{equation*}
		r^\delta(t,x) =  \dive_x \left( \int_{\T^d} [\b(t,x) - \b(t,x-y)] w(t,x-y)\rho^\delta(y)\de y  \right),
	\end{equation*}
	and we can estimate  
	\begin{equation*}
		\begin{split}
			\|r^\delta \|_{L^2(H^{-1})}&=\sup_{\|\varphi\|_{L^2 H^1}\leq 1}\left|\iint_{[0,T] \times \T^d}r^\delta(t,x)\varphi(t,x)\de t\de x\right|\\ 
			& = \sup_{\|\varphi\|_{L^2 H^1}\leq 1} \left \vert \iint_{[0,T] \times \T^d} \left( \int_{\T^d} [\b(t,x) - \b(t,x-y)] w(t,x-y)\rho^\delta(y)\de y  \right) \nabla \varphi(t,x) \de t\de x \right \vert \\
			& \le \sup_{\|\varphi\|_{L^2 H^1}\leq 1} \int_{\T^d} \rho(z) \int_0^T \int_{\T^d}|\b(t,x)-\b(t,x-\delta z)||w(t,x-\delta z)| |\nabla\varphi(t,x)|\de x \de t \de z.  
		\end{split}
	\end{equation*}
	Notice now that, as in the proof of Lemma \ref{lem:conv_comm}, the map $(t,x) \mapsto \b(t,x)-\b(t,x-\delta z)$ converges to $0$ in measure (for every fixed $z$). H\"older inequality on the product space $[0,T] \times \T^d$ with exponents $(p,q,2)$ (in space) and $(2,\infty, 2)$ (in time) allows to apply Lebesgue Dominated Convergence Theorem and we can therefore conclude that $r^\delta \to 0$ in $L^2(H^{-1})$. 
\end{proof}

\begin{rem}[On the case $\dive \b \in L^\infty$]\label{rem:div_bounded} Lemma \ref{lem:conv_comm2} remains valid if we require only $\dive \b \in L^\infty$. Indeed, one can write the commutator $r^\delta$ in the following way: 
\begin{align*}
	r^\delta & = \dive [\b (w*\rho^\delta) - (\b w)*\rho^\delta] - (w \ast \rho^\delta) \dive \b  + (w\dive \b) \ast \rho^\delta.
\end{align*}
The first two summands converge to $0$ in $L^2H^{-1}$ precisely by the proof above. For the other two terms, one observes that $w\ast \rho^\delta \to w$ strongly in $L^\infty L^q$ (hence also in $L^\infty L^2$, since $q>2$); similarly $(w\dive \b) \ast \rho^\delta \to w \dive \b$ strongly in $L^\infty L^2$. Overall, we get that the remainder $(w\dive \b) \ast \rho^\delta-(w \ast \rho^\delta) \dive \b  \to 0$ in $L^\infty L^2$ (and thus also $L^2H^{-1}$).
\end{rem}

We can now present a regularity result which guarantees that a distributional solution in the class $L^\infty([0,T];L^q(\T^d))$ is actually in $L^2([0,T];H^1(\T^d))$ whenever $\sfrac{1}{p}+\sfrac{1}{q}\leq \sfrac{1}{2}$. 

\begin{thm}\label{thm:regolarita}
	Let $p,q \ge 1$ such that $\sfrac{1}{p}+\sfrac{1}{q}\leq \sfrac{1}{2}$. If $\b\in L^2([0,T]; L^p(\T^d))$ is a divergence-free vector field and $u\in L^\infty([0,T];L^q(\T^d))$ is a distributional solution to \eqref{eq:ad}, then $u\in L^2([0,T];H^1(\T^d))$ and satisfies 
\begin{equation}\label{eq:energy_balance}
	\frac{1}{2}\int_{\T^d}|u|^2\de x+\int_0^T\int_{\T^d}|\nabla u|^2\de x\de t = \frac{1}{2}\int_{\T^d}|u_0|^2\de x .
	\end{equation}
\end{thm}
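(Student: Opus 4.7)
\emph{Strategy.} The plan is to derive the Sobolev regularity by closing an a priori estimate on $\nabla u^\delta$ in $L^2_tL^2_x$ using the refined commutator control of Lemma \ref{lem:conv_comm2}. First observe that $\sfrac{1}{p}+\sfrac{1}{q}\le\sfrac{1}{2}$ together with $p\ge 1$ forces $q\ge 2$, so $u\in L^\infty_tL^2_x$ with norm at most $M:=|\T^d|^{\sfrac{1}{2}-\sfrac{1}{q}}\|u\|_{L^\infty_tL^q_x}$ and in particular $u_0\in L^2$. Let $(\rho^\delta)_\delta$ be a standard family of spatial mollifiers and set $u^\delta:=u*\rho^\delta$. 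By linearity, $u^\delta$ is smooth in space and satisfies
\begin{equation*}
\partial_t u^\delta+\dive(\b u^\delta)=\Delta u^\delta+r^\delta,\qquad u^\delta|_{t=0}=u_0^\delta,
\end{equation*}
where $r^\delta$ is the commutator of \eqref{eq:comm_def}. Since $u^\delta$ is spatially smooth, the transport term vanishes against $u^\delta$ (because $\int\b\cdot\nabla(u^\delta)^2\,\de x=0$ by $\dive\b=0$), and a standard Lions--Magenes argument gives the mollified energy identity
\begin{equation*}
\frac{1}{2}\|u^\delta(t)\|_{L^2}^2+\int_0^t\|\nabla u^\delta(s)\|_{L^2}^2\,\de s=\frac{1}{2}\|u_0^\delta\|_{L^2}^2+\int_0^t\langle r^\delta(s),u^\delta(s)\rangle_{H^{-1},H^1}\,\de s.
\end{equation*}

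\emph{Closing the estimate.} By duality, the commutator term is bounded by $\|r^\delta\|_{L^2_tH^{-1}_x}\|u^\delta\|_{L^2_tH^1_x}$, and the uniform bound $\|u^\delta\|_{L^\infty_tL^2_x}\le M$ yields $\|u^\delta\|_{L^2_tH^1_x}^2\le TM^2+\|\nabla u^\delta\|_{L^2_tL^2_x}^2$. Applying Young's inequality $ab\le\tfrac{1}{2}a^2+\tfrac{1}{2}b^2$ and absorbing $\tfrac{1}{2}\|\nabla u^\delta\|_{L^2L^2}^2$ into the left-hand side, one obtains
\begin{equation*}
\|\nabla u^\delta\|_{L^2_tL^2_x}^2\le\|u_0\|_{L^2}^2+TM^2+\|r^\delta\|_{L^2_tH^{-1}_x}^2,
\end{equation*}
which is uniform in $\delta$ because $\|r^\delta\|_{L^2_tH^{-1}_x}\to 0$ by Lemma \ref{lem:conv_comm2}. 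Weak compactness in $L^2_tL^2_x$ and identification of the weak limit with $\nabla u$ (via $u^\delta\to u$ distributionally) give $u\in L^2_tH^1_x$.

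\emph{Energy balance.} With the Sobolev regularity in hand, standard properties of mollification give $u^\delta\to u$ strongly in $L^2_tH^1_x$. Moreover, H\"older and $\sfrac{1}{p}+\sfrac{1}{q}\le\sfrac{1}{2}$ give $\b u\in L^2_tL^2_x$, hence $\partial_t u=\Delta u-\dive(\b u)\in L^2_tH^{-1}_x$; the Lions--Magenes lemma then delivers $u\in C([0,T];L^2(\T^d))$ and in fact $u^\delta\to u$ in $C_tL^2_x$. Taking $t=T$ in the mollified identity and passing to the limit — the commutator term vanishes since it is bounded by $\|r^\delta\|_{L^2H^{-1}}\|u^\delta\|_{L^2H^1}\to 0$ — produces exactly \eqref{eq:energy_balance}.

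\emph{Main obstacle.} The estimate is circular in nature: Lemma \ref{lem:conv_comm2} only controls $r^\delta$ when tested against functions in $L^2_tH^1_x$, yet the $L^2_tH^1_x$ norm of $u^\delta$ is precisely the quantity we are trying to bound. The closure works because the dangerous term $\|\nabla u^\delta\|_{L^2L^2}^2$ appears quadratically with matching weights on both sides and can be absorbed by Young's inequality, while the low-order piece $\|u^\delta\|_{L^\infty L^2}$ is controlled a priori via the hypothesis $q\ge 2$ on the torus. The critical scale $\sfrac{1}{p}+\sfrac{1}{q}=\sfrac{1}{2}$ is exactly what makes both ingredients available simultaneously, and it is at this threshold that the argument breaks.
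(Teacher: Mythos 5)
Your proposal is correct and follows essentially the same route as the paper's proof: mollify, derive the energy identity with the commutator source term, bound that term by duality via Lemma \ref{lem:conv_comm2}, and absorb the gradient contribution by Young's inequality; the only (harmless) cosmetic difference is that you control $\|u^\delta\|_{L^\infty_t L^2_x}$ a priori by Young's convolution inequality, whereas the paper absorbs it into the left-hand side with a $\tfrac{1}{4(1+T)}$ weight. Your closing remarks on the limit passage for the energy balance and on why the absorption closes the estimate are consistent with the paper's argument.
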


\begin{proof} To commence, we observe that $\sfrac{1}{p}+\sfrac{1}{q}\leq \sfrac{1}{2}$ clearly implies that both $p,q \ge 2$ and, since we are on the torus, any $u\in L^\infty([0,T];L^q(\T^d))$ lies also in $L^\infty([0,T];L^2(\T^d))$. We thus need to prove $\nabla u\in L^2([0,T]; L^2(\T^d))$ and this will be achieved exhibiting an approximating sequence $(u^\delta)_{\delta}$ enjoying uniform bounds on $\nabla u^\delta$: in turn, this estimate will be obtained as a consequence of Lemma \ref{lem:conv_comm2}. 
	
Let $(\rho^\delta)_{\delta}$ be a standard family of mollifiers. As in the proof of Theorem \ref{thm:uniqueness_weak_parabolic}, the function $u^\delta:=u*\rho^\delta$ solves \eqref{eq:ade}. 
Let us now prove an estimate on the $H^1$-norm of $u^\delta$ which is independent of $\delta$: multiply the equation \eqref{eq:ade} by $u^\delta$ and integrate by parts to obtain
	\begin{equation}\label{eq:stima_energia}
	\frac{1}{2}\int_{\T^d}|u^\delta|^2\de x+\int_0^T\int_{\T^d}|\nabla u^\delta|^2\de x\de t = \frac{1}{2}\int_{\T^d}|u_0^\delta|^2\de x +\int_0^T\int_{\T^d}r^\delta u^\delta  \de x \de t.
	\end{equation}
	On the one hand, by standard properties of convolutions, we can estimate the first term in the right-hand side of \eqref{eq:stima_energia} as
	\begin{equation}\label{eq:stima_rhs1}
	\int_{\T^d}|u_0^\delta|^2\de x = \Vert u_0^\delta \Vert_{L^2}^2 \leq  \Vert u_0^\delta\Vert_{L^q}^2 \le \|u_0\|_{L^q}^2.
	\end{equation}
	On the other hand, for the second term in the right-hand side of \eqref{eq:stima_energia} we can apply Young's inequality to obtain 
	\begin{equation}\label{eq:stima_rhs2} 
	\begin{split}
	\int_0^T\int_{\T^d}r^\delta(t,x) u^\delta(t,x) \de x \de t&\leq  \|r^\delta\|_{L^2 H^{-1}} \|u^\delta\|_{L^2H^1}\\
	& \le C(T) \|r^\delta\|_{L^2 H^{-1}}^2 + \frac{1}{4(1+T)} \|u^\delta\|^2_{L^2H^1} \\ 
	& = C(T) \|r^\delta\|_{L^2 H^{-1}}^2 + \frac{1}{4(1+T)}\left( \|u^\delta\|^2_{L^2 L^2}+\|\nabla u^\delta\|^2_{L^2L^2}\right)\\
	& \le C(T) \|r^\delta\|_{L^2 H^{-1}}^2 + \frac{1}{4(1+T)}\left( T\|u^\delta\|^2_{L^\infty L^2}+\|\nabla u^\delta\|^2_{L^2L^2}\right)\\ 
	& \le C(T) \|r^\delta\|_{L^2 H^{-1}}^2 + \frac{1}{4} \left( \|u^\delta\|^2_{L^\infty L^2}+\|\nabla u^\delta\|^2_{L^2L^2}\right).
	\end{split}
	\end{equation}
	Since $r^\delta$ goes to $0$ in $L^2(H^{-1})$, the term $\|r^\delta\|_{L^2 H^{-1}}$ is equi-bounded. 
	Combining \eqref{eq:stima_rhs1}, \eqref{eq:stima_rhs2} and plugging them into \eqref{eq:stima_energia} we can conclude
	\begin{equation*}
	 \|u^\delta\|^2_{L^\infty L^2}+\|\nabla u^\delta\|^2_{L^2L^2}\leq C(T,\Vert u_0\Vert_{L^q}), 
	\end{equation*}
	for some constant $C$ which does not depend on $\delta$: this shows that the distributional solution $u$ is parabolic and thus unique thanks to Theorem \ref{thm:uniqueness_weak_parabolic}. Finally, \eqref{eq:energy_balance} immediately follows from \eqref{eq:stima_energia} sending $\delta \to 0$.
\end{proof}

\begin{rem}
    We observe that \eqref{eq:energy_balance} holds as an equality, that is, as an exact energy balance. This is a more precise information than the bound obtained in Proposition \ref{prop:existence_parabolic_class}, which is obtained via weak convergence and lower semicontinuity, and therefore holds as an inequality.
\end{rem}

Combining Theorem \ref{thm:regolarita} and Theorem \ref{thm:uniqueness_weak_parabolic}, we obtain the following corollary.
\begin{cor}
	Let $p,q \ge 1$ such that $\sfrac{1}{p}+\sfrac{1}{q}\leq \sfrac{1}{2}$. If $\b\in L^2([0,T]; L^p(\T^d))$ is a divergence-free vector field, then there exists at most one distributional solution $u\in L^\infty([0,T];L^q(\T^d))$.
\end{cor}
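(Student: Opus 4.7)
The plan is to derive this corollary as an essentially immediate consequence of the two main theorems just established. By linearity of the advection-diffusion equation, it suffices to prove that any distributional solution $u \in L^\infty([0,T]; L^q(\T^d))$ with vanishing initial datum must vanish identically. Given two distributional solutions $u_1, u_2$ sharing the same initial datum, their difference $u := u_1 - u_2$ satisfies the same PDE in the distributional sense (with the same vector field $\b$) and has zero initial datum, and lies in $L^\infty_t L^q_x$.

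Before invoking the uniqueness theorem, I would verify a compatibility of hypotheses. The constraint $\sfrac{1}{p}+\sfrac{1}{q} \le \sfrac{1}{2}$ with $p,q\ge 1$ forces both $p\ge 2$ and $q\ge 2$. Since we are working on the compact torus $\T^d$, the Lebesgue embedding $L^p(\T^d) \hookrightarrow L^2(\T^d)$ gives $\b \in L^2([0,T]; L^2(\T^d))$, which is precisely the integrability required by Theorem \ref{thm:uniqueness_weak_parabolic}.

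The key step is then Theorem \ref{thm:regolarita}: applied to $u$, it promotes its regularity from the distributional class $L^\infty_t L^q_x$ to the parabolic class $L^2_t H^1_x$. Consequently $u$ is itself a parabolic solution of \eqref{eq:ad} with zero initial datum. Theorem \ref{thm:uniqueness_weak_parabolic} then applies directly and yields $u \equiv 0$, which concludes the argument.

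There is no real obstacle here beyond the bookkeeping described above; the corollary is truly a two-line combination of the regularity theorem (which fills the gap between distributional and parabolic solutions) and the uniqueness theorem in the parabolic class. The only subtle point worth highlighting in the write-up is the implicit use of the compactness of $\T^d$ to descend from $L^p$ to $L^2$ integrability of $\b$, a step that would \emph{not} be available on an unbounded domain such as $\R^d$.
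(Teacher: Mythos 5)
Your argument is correct and is exactly the paper's intended proof: the authors state the corollary as an immediate combination of Theorem \ref{thm:regolarita} (every distributional solution in this regime is parabolic) and Theorem \ref{thm:uniqueness_weak_parabolic} (parabolic solutions are unique), giving no further detail. Your extra check that $p\ge 2$ and the compactness of $\T^d$ yield $\b\in L^2([0,T];L^2(\T^d))$, as required by the uniqueness theorem, is a worthwhile piece of bookkeeping that the paper leaves implicit.
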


\subsection{Further remarks and open problems} 
We conclude this section with some observations and some further open questions. 

The assumption on the time-integrability of the vector field in Theorem \ref{thm:regolarita} suggests the following question.
\begin{itemize}
\item[(Q4)] \emph{Let $u\in L^\infty([0,T];L^q(\T^d))$ be a distributional solution associated to a divergence-free vector field $\b\in L^r([0,T];L^p(\T^d))$ with $1\leq r <2$, and assume that $\sfrac{1}{p}+\sfrac{1}{q}\leq \sfrac{1}{2}$. Is $u$ a parabolic solution?}
\end{itemize}

At this point, it is natural to wonder whether in the regime $\sfrac{1}{2} < \sfrac{1}{p}+\sfrac{1}{q}\leq 1$ there exist distributional solutions that are not parabolic and, therefore, whether uniqueness of parabolic solutions holds but uniqueness of distributional solutions does not. A partial answer to this, in dimension $d >2$, can be obtained using \cite[Theorem 1.4]{MS3}, which gives non uniqueness of distributional solutions in the regime $\sfrac{1}{p}+\sfrac{1}{q}=1$ and $p<d$ (notice that in those examples the vector field and the solution are bounded in time). A particular case of interest (somewhat reminiscent of the case of the Navier-Stokes equations in \cite{BV}) is when $p=q=2$: with such a choice, one obtains an example where there exist infinitely many distributional solutions, despite the parabolic one is unique in view of Theorem \ref{thm:uniqueness_weak_parabolic}.  

However, the convex integration schemes of \cite{MS3} are not able to cover the case $d=2$. We therefore formulate the following question. 
\begin{itemize}
\item[(Q5)]\emph{Does it exist a divergence-free vector field $\b\in L^2([0,T];L^2(\T^2))$ and a distributional solution $u\in L^\infty([0,T];L^2(\T^2))$ which is not parabolic, i.e. $u\notin L^2([0,T];H^1(\T^2))$? What if the vector field $\b\in L^2(\T^2)$ is autonomous?}
\end{itemize}
As a last point, we observe that the situation in the intermediate regime $\sfrac{1}{2} < \sfrac{1}{p}+\sfrac{1}{q}<1$ is completely open: 
\begin{itemize}
	\item[(Q6)]\emph{Let $\sfrac{1}{2} < \sfrac{1}{p}+\sfrac{1}{q}<1$. Does it exist a divergence-free vector field $\b\in L^2([0,T];L^p(\T^d))$ and a distributional solution $u\in L^\infty([0,T];L^q(\T^d))$ which is not parabolic, i.e. $u\notin L^2([0,T];H^1(\T^d))$?}
\end{itemize}

It is worth noticing that a partial answer to (Q6) is given in \cite[pag. 70]{LBL}: when $1\le q<2$ we cannot expect $\nabla u \in L^2_tL^2_x$ since this does not hold for the heat equation.



\begin{remark}\label{rem:figalli} In \cite[Theorem 4.3]{F08} it is shown that, for bounded vector fields, distributional solutions $u \in L^2_{t,x}$ are parabolic and therefore unique. This deep result is proven for advection-diffusion equations with possibly non-smooth diffusion operators and relies on a combination of parabolic PDEs tools and variational arguments. The gist of Figalli's proof of the parabolic regularity is that, for given $f \in L^2([0,T];H^{-1}(\T^d))$ and  $w_0 \in L^2(\T^d)$, there exists a unique parabolic solution to the (inhomogeneous) heat equation 
	\begin{equation*}
		\begin{cases} 
			\partial_t w - \Delta w = f\\ 
			w |_{t=0} = w_0.
		\end{cases}
	\end{equation*}
	Now given $u,\b$ as in Theorem \ref{thm:regolarita}, one can choose $f:= -\dive(\b u)$ and $w_0 := u_0$. It is then immediate to check that $w-u$ is an $L^2([0,T];L^2(\T^d))$ solution to the heat equation with zero initial data. Standard facts imply that $u=w$ a.e. and therefore parabolic regularity, uniqueness and the energy balance \eqref{eq:energy_balance} follow. Our approach, inspired by LeBris-Lions' one, is based on the new commutator estimate (Lemma \ref{lem:conv_comm2}) and gives a self-contained proof of the parabolic regularity within the theory of renormalized solutions. Interestingly, a similar $L_t^2H_x^{-1}$ commutator estimate can also be applied to non-smooth diffusions in the setting of \cite{F08} and this direction is currently under investigation in the forthcoming work \cite{BCC2}.
\end{remark}

\subsection*{Acknowledgements} P. Bonicatto has received funding from the European Research Council (ERC) under the European Union's Horizon 2020 research and innovation programme, grant agreement No 757254 (SINGULARITY). G. Ciampa is supported by the ERC Starting Grant 101039762 HamDyWWa. G. Crippa is supported by SNF Project 212573 FLUTURA – Fluids, Turbulence, Advection. Views and opinions expressed are however those of the authors only and do not necessarily reflect those of the European Union or the European Research Council. Neither the European Union nor the granting authority can be held responsible for them.

\end{document}